\theoremstyle{plain}
\newtheorem{Prop}{Proposition}[section]
\newtheorem{Thm}[Prop]{Theorem}
\newtheorem{Cor}[Prop]{Corollary}
\newtheorem{Lem}[Prop]{Lemma}
\theoremstyle{definition}
\newtheorem{Def}[Prop]{Definition}
\theoremstyle{remark}
\newtheorem{Rem}[Prop]{Remark}
\newtheorem{Problem}[Prop]{\bf Problem}
\def\dim{\mathop{\roman{dim}}}
\def\int{\mathop{\roman{int}}}
\def\1{^{-1}}
\def\dim{\text{dim}}
\def\asdim{\text{asdim}}
\def\T2{{\mathbf T_2}}
\def\NN{{\mathbb N}}
\def\UU{{\mathcal U}}
\def\RR{{\mathbb R}}
\def\ZZ{{\mathbb Z}}
\numberwithin{equation}{section}
\begin{document}

\title[Assuad-Nagata dimension of nilpotent groups with arbitrary left invariant metrics]{Assouad-Nagata dimension of nilpotent groups with arbitrary left invariant metrics}
\author{J.~Higes}
\address{Departamento de Geometr\'{\i}a y Topolog\'{\i}a,
Facultad de CC.Matem\'aticas. Universidad Complutense de Madrid.
Madrid, 28040 Spain}
\email{josemhiges@yahoo.es}

\keywords{Assouad-Nagata dimension, asymptotic dimension, nilpotent groups}

\subjclass[2000]{Primary 54F45; Secondary 55M10, 54C65}
\date{April 23, 2008}
\thanks{ The author is supported by Grant AP2004-2494 from the Ministerio de Educaci\' on y Ciencia, Spain. He also thanks Jerzy Dydak and N. Brodskyi for helpful comments 
and support.}

\begin{abstract}Suppose $G$ is a countable, not necessarily finitely generated, group. We show $G$ admits a proper, left-invariant metric $d_G$ 
such that the Assouad-Nagata dimension of $(G,d_G)$ is infinite, provided the center of $G$ is not locally finite. 
As a corollary we solve two problems of A.Dranishnikov.
\end{abstract}
\maketitle
\tableofcontents
\section{Introduction}\label{Introduction}
The asymptotic dimension was introduced by Gromov in \cite{Gromov} as a coarse invariant to study the geometric structure of finitely generated groups. 
 We refer to \cite{Bell-Dran} for a survey about this topic. Closely related with the asymptotic dimension is the asymptotic dimension of linear type.
It is also called asymptotic Assouad-Nagata dimension in honor of Patrice Assouad who introduced it in \cite{Assouad} from the ideas of Nagata. 
Such dimension can be considered as the linear version of the asymptotic dimension. In recent years a part of the research 
activity was focused on this dimension and its relationship with the asymptotic dimension (see for example \cite{Lang}, \cite{Dran-Smith2}, \cite{Dran-Zar}, \cite{Brod-Dydak-Higes-Mitra},
\cite{Brod-Dydak-Higes-Mitra Lipschitz}, \cite{Brod-Dydak-Lang}, \cite{Brod-Dydak-Levin-Mitra}, \cite{Nowak} \cite{Dydak-Higes}, \cite{Higes}). One of the main problems of 
interest consists in studying the differences between the asymptotic dimension and the asymptotic Assouad-Nagata dimension in the context of the geometric group theory. 
In particular
there are two main questions:
\begin{enumerate}
\item Given a finitely generated group $G$ with a word metric $d_G$. Are the asymptotic dimension and the asymptotic Assouad-Nagata dimension of $(G, d_G)$ equal?
\item In the case the two dimensions differ, must their difference be infinite?
\end{enumerate}  
It is known that the first question has an affirmative answer for abelian groups,
finitely presented groups of asymptotic dimension one, and for hyperbolic groups. But in general the 
answer is negative. Nowak (\cite{Nowak}) found for every $n\ge 1$ a finitely generated group of asymptotic dimension $n$ but of infinite asymptotic Assouad-Nagata dimension. 
\par
As far as the author knows the second question is still open. 
There is no example of a finitely generated group such that the asympotic dimension is strictly smaller than  the asymptotic Assouad-Nagata 
dimension but both are finite. In \cite{Higes} the second question was solved in a more general context. It was proved that for every $n$ and $m$ there exists a countable 
abelian group(non finitely generated) with a proper left invariant metric such that the group is of asymptotic dimension $n$ but of asympotic Assouad-Nagata dimension equal to 
$n+m$. Proper left invariant metrics 
are natural generalizations of word metrics. Therefore it is natural to ask the same
two problems in the case of finitely generated groups equipped with proper left invariant
metrics.\par
The aim of this note is to study the behaviour of the asymptotic Assouad-Nagata dimension in nilpotent groups with proper left invariant metrics. 
It is highly likely that both dimensions coincide 
in nilpotent groups with word metrics (see \cite{DranProblems}). For example in \cite{Dydak-Higes} it was proved their coincidence for the Heisenberg group. We will show that 
for every nilpotent group it is possible to find a proper left invariant metric such that both dimensions are different.  Dranishnikov in 
\cite{DranProblems} asked about what can be considered a special case:
\begin{Problem} (Dranishnikov \cite{DranProblems}) Does $dim_{AN}(\ZZ, d) = 1$ for every left invariant metric on $\ZZ$?
\end{Problem}  
In relation to the above problem he asked the following:
\begin{Problem}(Dranishnikov \cite{DranProblems}) Does $dim_{AN}(\Gamma \times \ZZ) = dim_{AN}(\Gamma) +1$ for any left invariant metric on $\ZZ$?
\end{Problem}
Notice that in \cite{Dran-Smith2}, Dranishnikov and Smith proved that $asdim_{AN}(G \times \ZZ) = \asdim_{AN} (G) +1$ for every finitely generated group $G$ but in this case 
the metrics consideres were the word metrics.\par
Our main theorem deals with a larger class of groups than nilpotent ones:
\begin{Thm} If $G$ is a group such that its center is not locally finite then there exists a proper left invariant metric $d_G$ such that $asdim_{AN}(G, d_G) = \infty$.
\end{Thm}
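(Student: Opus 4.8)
The plan is to reduce the statement to a single central copy of $\ZZ$ carrying a pathological metric, and then to transplant that metric to all of $G$. Since the center $Z(G)$ is abelian and not locally finite, it contains a finitely generated infinite subgroup, hence (being finitely generated abelian of positive rank) an element $z$ of infinite order; set $H=\la z\ra\cong\ZZ$, a central, in particular normal, subgroup of $G$. Because $\asdim_{AN}$ is monotone under passing to a metric subspace (restrict any cover, keeping mesh and Lebesgue number), it suffices to build a proper left invariant metric $d_G$ on $G$ whose restriction to $H$ is a prescribed metric $d_H$ on $\ZZ$ with $\asdim_{AN}(\ZZ,d_H)=\infty$: then $\asdim_{AN}(G,d_G)\ge\asdim_{AN}(H,d_G|_H)=\infty$.

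First I would produce the bad metric $d_H=d_{\ell_H}$ on $\ZZ$ from a proper length function $\ell_H$ built with shortcut generators. Fix rapidly increasing exponents $a_0=1\mid a_1\mid a_2\mid\cdots$ and weights $c_j$, organised in blocks so that, via base expansion $m=\sum_j d_j a_j$, the subset $\{z^m:0\le m<R_k\}$ is bi-Lipschitz (at a scale $s_k$) to the $\ell^1$-grid $\{0,\dots,b_k\}^k$ with side $b_k\to\infty$ and $s_k\to\infty$. The lower bound I would invoke is the single-scale version of ``$\ZZ^k$ has dimension $k$'': a sufficiently large $k$-dimensional integer grid admits no cover that is $(n{+}1)$-colourable, $s$-disjoint in each colour, and of mesh $\le cs$ once $k>n$, with the threshold controlled only by $c$. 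Since such grids occur for \emph{all} $k$ at the scales $s_k$, no single pair $(n,c)$ can witness finiteness of $\asdim_{AN}$, giving $\asdim_{AN}(\ZZ,d_H)=\infty$.

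Next comes the extension, where centrality is the crucial tool. Choose any baseline proper length function $\ell_0$ on the countable group $G$; since shortcut generators only \emph{decrease} lengths, I may design $\ell_H$ to grow slowly enough that $\ell_H(z^p)\le\ell_0(z^p)$ for all $p$, while still tending to infinity. Define a weight $\omega$ on $G$ by $\omega=\ell_H$ on $H$ and $\omega=\ell_0$ off $H$, and let $\ell$ be the largest length function with $\ell\le\omega$, i.e. $\ell(g)=\inf\{\sum_t\omega(s_t):g=s_1\cdots s_r\}$, with $d_G(g,h)=\ell(g\1 h)$. Then $\ell|_H=\ell_H$: the inequality $\ell(z^m)\le\ell_H(z^m)$ is automatic, and for the reverse, given any factorization $z^m=s_1\cdots s_r$, centrality lets me collect all letters lying in $H$ to the left without changing the product, splitting $z^m=z^{m_0}\cdot z^{p}$ where $z^{m_0}$ is the product of the $H$-letters and $z^p$ (with $m_0+p=m$) that of the remaining letters. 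Subadditivity gives $\sum_{s_t\in H}\ell_H(s_t)\ge\ell_H(z^{m_0})$ and $\sum_{s_t\notin H}\ell_0(s_t)\ge\ell_0(z^{p})\ge\ell_H(z^{p})$, whence the total cost is $\ge\ell_H(z^{m_0})+\ell_H(z^{p})\ge\ell_H(z^{m_0+p})=\ell_H(z^m)$. Thus $H$ is isometrically embedded, and $\asdim_{AN}(G,d_G)=\infty$.

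The hard part will be the two places where I have suppressed quantitative work. The first is calibrating $\ell_H$ so that it simultaneously realizes the high-dimensional grids at all scales \emph{and} stays below $\ell_0|_H$; when $z$ is badly distorted in $(G,\ell_0)$ the function $\ell_0(z^p)$ may grow very slowly, forcing the grids to be placed at very large exponents, and I must check that infinite $\asdim_{AN}$ survives arbitrarily slow growth. The second is verifying that the extension $\ell$ is genuinely proper on all of $G$, which I would get from a projection argument to $G/H$ showing any factorization of $g$ costs at least the pushforward length of its image plus an $H$-contribution. Finally, specializing $G=\ZZ$ answers Dranishnikov's first problem in the negative, and taking $G=\Gamma\times\ZZ$ shows the additivity formula of his second problem can fail for non-word metrics on the $\ZZ$-factor.
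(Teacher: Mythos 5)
Your architecture is genuinely different from the paper's and, in outline, sound. The paper never isolates a ``bad metric on $\ZZ$'' and then transplants it: it runs an infinite induction directly on $G$ (Lemma \ref{Inductive} followed by the limit construction in Theorem \ref{MainTheorem}), producing at stage $i$ a metric containing a dilated ball of $\ZZ^i$, and it gets the lower bound from asymptotic cones (Proposition \ref{SequenceOfCubesImpliesCube} together with Theorem \ref{DimensionOfCones}) rather than from a single-scale Lebesgue-type covering obstruction on large grids. Your reduction is attractive: the centrality argument showing $\ell|_H=\ell_H$ (collect the $H$-letters, observe that the product of the remaining letters must itself lie in $H$, then use $\ell_H\le\ell_0|_H$ and subadditivity) is correct and cleanly absorbs what the paper handles as ``Case 2'' of its lemma; monotonicity of $\asdim_{AN}$ under passing to subspaces and the properness check via projection to $G/H$ both go through.

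There is, however, a real gap at the point you yourself flag: the existence of a proper left-invariant metric on $\ZZ$ with infinite Assouad--Nagata dimension that is moreover dominated by the (possibly very slowly growing) function $\ell_0|_H$ is asserted, not proved, and it is exactly the technical heart of the paper. The danger is not the slow growth of $\ell_0|_H$ as such --- properness of $\ell_0$ lets you push each block of shortcut exponents far enough out that the weights $c_j\le\ell_0(z^{a_j})$ can still be taken as large as needed --- but the verification that the base-expansion map really is a dilatation onto the grid, i.e.\ that no factorization of $z^{\sum_j d_ja_j}$ mixing shortcuts from other blocks, or using a cheap non-shortcut letter $z^q$ with $\ell_0(z^q)$ small, undercuts the intended cost $\sum_j |d_j|c_j$. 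The paper's Lemma \ref{Inductive} is precisely this verification: the condition $\sum_{i<j}(2km)2^{p_i}<2^{p_j}$ kills within-block interference, the threshold $a$ with $R<C<a/(2km^2)$ kills the cheap-residue factorizations, and condition (2) of that lemma (agreement with the previous metric on a prescribed ball) is what keeps the infinitely many blocks from interfering with one another in the limit. Your proposal needs analogues of all three estimates to become a proof; with them supplied, the remainder of your argument is correct and arguably cleaner than the paper's.
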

It is clear that if we apply the previous theorem to $G = \ZZ$, the two questions of
Dranishnikov are solved in negative. \par
The key ingredient of the proof was introduced by the author in \cite{Higes}.  
In such paper it was shown that if there exists a sequence of 
isometric embeddings (up to dilatation) of balls $\{B(0, k_i)\}_{i\in \NN}$ of $\ZZ^m$ 
into a metric space $X$ where the sequence of radious $k_i$ tends to infinity 
then the asymptotic Assouad-Nagata dimension of $X$ is greater than $m$. That result should be viewed as applying the philosophy of Whyte \cite{Whyte} 
in a rather restricted form. 
Instead of looking for subsets of a group $G$ that are bi-Lipschitz equivalent to $Z^n$, 
we are constructing groups that contain rescaled copies of large balls in $Z^n$. \par
Section \ref{GeometricAssouadNagata}  is devoted to a key ingredient used in Section \ref{MainSection} to present proofs of main results.

\section{Preliminaries}\label{Preliminaries}
Let $s$ be a positive real number. An {\it $s$-scale chain} (or $s$-path)
 between two points $x$ and $y$ of a metric space $(X, d_X)$ is defined as
a finite sequence points
$\{x= x_0, x_1, ..., x_m = y\}$ such that
$d_X(x_i, x_{i+1}) < s$ for every $i = 0, ..., m-1$.
A subset $S$ of a metric space $(X, d_X)$ is said to be {\it $s$-scale connected} if there exists an $s$-scale chain
contained in $S$ for every two elements of $S$.
\begin{Def} A metric space $(X, d_X)$ is said to be of
{\it asymptotic dimension} at most $n$ (notation $\asdim(X, d) \le
n$) if there is an increasing function $D_X: \RR_+ \to \RR_+$ such
that for all $s> 0$ there is a cover $\UU =\{\UU_0, ...,\UU_n\}$
so that the $s$-scale connected components of each $\UU_i$ are
$D_X(s)$-bounded i.e. the diameter of such components is bounded
by $D_X(s)$. \par The function $D_X$ is called an {\it
$n$-dimensional control function} for $X$. Depending on the type
of $D_X$ one can define the following two invariants: \par A
metric space $(X, d_X)$ is said to be of {\it  Assouad-Nagata
dimension} at most $n$ (notation $\dim_{AN} (X, d) \le n$) if it
has an $n$-dimensional control function $D_X$ of the form
 $D_X(s) = C\cdot s$ with $C>0$ some fixed constant. \par
A metric space $(X, d_X)$ is said to be of
{\it asymptotic Assouad-Nagata dimension} at most $n$ (notation $\asdim_{AN} (X, d) \le n$) if it has an
$n$-dimensional control function $D_X$ of the form
 $D_X(s) = C\cdot s +k$ with $C >0$ and $k\in \RR$ two fixed constants.
\end{Def}
It is clear from the definition that for every metric space $(X, d_X)$, $asdim(X, d_X) \le asdim_{AN}(X, d_X)$.\par
One important fact about the asymptotic dimension is that it is invariant under coarse equivalences. 
Given a map $f: (X, d_X) \to (Y, d_Y)$ between two metrics spaces
it is said to be a {\it coarse embedding} if there exist two increasing functions 
$\rho_+: \RR_+ \to \RR_+$ and $\rho_-: \RR_+\to \RR_+$ with 
$\lim\limits_{x\to \infty} \rho_-(x) =\infty$
such that:
\[\rho_-(d_X(x, y)) \le d_Y(f(x), f(y))\le \rho_+(d_X(x, y)) \text{ for every } x, y \in X.\]
Now a {\it coarse equivalence}
 between two metrics spaces $(X, d_X)$ and $(Y, d_Y)$ is defined as a coarse embedding
$f: (X, d_X)\to (Y, d_Y)$ for which there exists a constant 
$K >0$ such that
$d_Y(y, f(X))\le K$ for every $y \in Y$. 
If there exists a coarse equivalence between $X$ and $Y$ both spaces are said to be {\it coarsely equivalent}.\par
The metrics spaces in which we are interested are countable groups with proper left invariant metrics.
\begin{Def}
A metric $d_G$ defined in a group $G$ is said to be a
{\it proper left invariant metric} if it satisfies the following conditions:
\begin{enumerate}
\item $d_G(g_1\cdot g_2, g_1\cdot g_3) = d_G(g_2, g_3)$ for every $g_1, g_2, g_3 \in G$.
\item For every $K>0$ the number of elements $g$ of $G$ such that $d(1_G, g)_G \le K$ is finite.
\end{enumerate}
\end{Def}

The following basic result of Smith will be used to get the main theorem. 
\begin{Thm}(Smith \cite{Smith})\label{SmithTheorem} Two proper left invariant metrics defined in a countable group are coarsely equivalent.
\end{Thm}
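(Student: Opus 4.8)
The plan is to show that the identity map $\mathrm{id}\colon (G,d_1)\to(G,d_2)$ is a coarse equivalence for any two proper left invariant metrics $d_1,d_2$ on the countable group $G$. Since the identity is a bijection, the coarse surjectivity condition $d_2(y,\mathrm{id}(G))\le K$ holds trivially with $K=0$, so it suffices to produce increasing functions $\rho_-,\rho_+\colon \RR_+\to\RR_+$ with $\lim_{t\to\infty}\rho_-(t)=\infty$ satisfying $\rho_-(d_1(x,y))\le d_2(x,y)\le \rho_+(d_1(x,y))$ for all $x,y\in G$. First I would exploit left invariance to reduce the problem to the ``norm'' functions $\abs{g}_i:=d_i(1_G,g)$: indeed $d_i(x,y)=d_i(1_G,x^{-1}y)=\abs{x^{-1}y}_i$, so it is enough to find $\rho_\pm$ with $\rho_-(\abs{g}_1)\le \abs{g}_2\le \rho_+(\abs{g}_1)$ for every $g\in G$ and then substitute $g=x^{-1}y$. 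If $G$ is finite both spaces are bounded and the statement is trivial, so I assume $G$ is infinite, in which case every co-finite set is nonempty and balls of arbitrarily large radius exist.

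For the upper bound, define $\rho_+(t):=\max\{\abs{g}_2 : \abs{g}_1\le t\}$. The set $\{g: \abs{g}_1\le t\}$ is finite by the properness of $d_1$, so the maximum exists; $\rho_+$ is non-decreasing and by construction $\abs{g}_2\le \rho_+(\abs{g}_1)$. For the lower bound, define $\rho_-(t):=\inf\{\abs{g}_2:\abs{g}_1\ge t\}$, which is non-decreasing (the defining set shrinks as $t$ grows) and satisfies $\rho_-(\abs{g}_1)\le \abs{g}_2$, since $g$ itself lies in the set over which the infimum is taken.

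The crux is to verify that $\rho_-(t)\to\infty$, and this is precisely where the properness of the \emph{second} metric enters. I would argue by contradiction: if $\rho_-$ were bounded by some $M$, then for each $n\in\NN$ one could choose $g_n$ with $\abs{g_n}_1\ge n$ and $\abs{g_n}_2\le M+1$. Since $\abs{g_n}_1\to\infty$, the elements $g_n$ take infinitely many distinct values, yet they all lie in the ball $\{g:\abs{g}_2\le M+1\}$, contradicting the properness of $d_2$. Hence $\rho_-(t)\to\infty$. If the definition of coarse embedding is read with strict monotonicity in mind, one replaces $\rho_\pm(t)$ by $\rho_\pm(t)+t$, which preserves all the required inequalities. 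Putting these together, the identity is a coarse embedding, and being surjective it is a coarse equivalence. The only delicate point is that finiteness of balls is used in two opposite directions: properness of $d_1$ bounds $\rho_+$ from above, while properness of $d_2$ forces $\rho_-$ to diverge; the construction is symmetric in $d_1$ and $d_2$, consistent with the symmetry of the coarse equivalence relation.
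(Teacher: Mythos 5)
The paper does not prove this statement at all---it is quoted from Smith's paper \cite{Smith} as a known result---so there is no internal proof to compare against; your argument is the standard one and it is essentially correct. The reduction to norms via left invariance, the definitions $\rho_+(t)=\max\{\abs{g}_2:\abs{g}_1\le t\}$ (finite and attained by properness of $d_1$) and $\rho_-(t)=\inf\{\abs{g}_2:\abs{g}_1\ge t\}$, and the contradiction argument showing $\rho_-(t)\to\infty$ from properness of $d_2$ are all sound, and you correctly identify that the two properness hypotheses are used in opposite directions.

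One small slip: your proposed repair for strict monotonicity, replacing $\rho_\pm(t)$ by $\rho_\pm(t)+t$, works for $\rho_+$ (enlarging an upper bound is harmless) but fails for $\rho_-$, since the required inequality is $\rho_-(d_1(x,y))\le d_2(x,y)$ and adding $d_1(x,y)$ to the left-hand side can violate it (e.g.\ whenever $d_1(x,y)$ is much larger than $d_2(x,y)$, which is exactly the situation the lower bound is meant to control). The fix is either to read ``increasing'' as non-decreasing, as is customary for the compression function in coarse geometry, or to pass to a strictly increasing minorant of $\rho_-$ that still diverges (such a minorant always exists for a non-decreasing function tending to infinity). With that adjustment the proof is complete.
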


One way of constructing proper left invariant metrics in a countable group is via proper norms. 
\begin{Def}
A map $\|\cdot\|_G: G \to \RR_+$ is called to be a {\it proper norm} if  it satisfies
the following conditions:
\begin{enumerate}
\item $\|g\|_{G} = 0$ if and only if $g = 1_G$.
\item $\|g\|_G = \|g^{-1}\|_G$ for every $g \in G$.
\item $\|g\cdot h\|_G \le \|g\|_G +\|h\|_G$ for every $g, h \in G$.
\item For every $K>0$ the number of elements of $G$ such that $\|g\|_G\le K$ is finite.
\end{enumerate}
\end{Def}
It is clear that there is a one-to-one correspondence between proper norms and proper left invariant metrics. \par
Now we are interested in methods to get proper norms with some special properties. In general this task is not easy. In this paper 
we will use the method of weights described by Smith in \cite{Smith}.  Let $S$ be a symmetric system of
generators(possibly infinite) of a countable group $G$ and let $\omega:
L \to \RR_+$ be a function({\it weight function} or {\it system of weights}) that satisfies:
\begin{enumerate}
\item $\omega(s) = 0$ if and only if $s = 1_G$
\item $\omega(s) = w(s^{-1})$.
\item $\omega^{-1}[0, N]$ is finite for every $N$ .
\end{enumerate}
Then the function $\|\cdot\|_w: G\to \RR_+$ defined by:
\[\|g\|_w = \min\{\sum_{i=1}^n w(s_i)| x = \Pi_{i=1}^n s_i, \text{ }s_i \in S\}\]
is a proper norm. Such norm will be called the proper norm {\it  generated by the system of weights} $\omega$ and the associated left invariant metric
will be the {\it left invariant metric generated by the system of weights} $\omega$. 
\begin{Rem}\label{Weight}
\begin{enumerate} 
\item If we define $w(g) = 1$ for
all the elements $g \in S$ of a finite generating system $S\subset G$ ($G$ a finitely generated group) we will obtain the usual word metric.
\item Notice that if we have a proper norm $\|\cdot \|_G$ in a countable group $G$ and we take the system of weiths defined by
$\omega(g) = \|g\|_G$ then the proper norm $\|\cdot\|_{\omega}$ generated by this system of weights coincides with $\|\cdot\|_G$.
\item We can construct easily integer valued proper left invariant metrics by getting weight functions with integer range. 
\end{enumerate}
\end{Rem}

\section{Lower bounds for Assouad-Nagata dimension}\label{GeometricAssouadNagata}
The aim of this section is to give a sufficient condition in a metric space $(X, d_X)$ that implies $asdim_{AN}(X, d_X) \ge n$ for some $n$.
To get that condition we will use the notion of asymptotic cone.\par
Let $(X, d_X)$ be a
metric space. Given a non-principal ultrafilter $\omega$ of $\NN$
and a sequence $\{x_n\}_{n\in\NN}$ of points of $X$, the {\it
$\omega$-limit} of $\{x_n\}_{n\in\NN}$ (notation:
$\lim\limits_{\omega} x_n = y$) is an element $y$ of $X$
such that for every neighborhood $U_y$ of $y$ the set $F_{U_y} = \{n|
x_n \in U_y\}$ belongs to $\omega$. It can be proved
easily that the $\omega$-limit of a sequence always exists in a compact space.
\par Assume $\omega$ is a non principal ultrafilter of $\NN$.  Let
$d =\{d_n\}_{n\in\NN}$ be an
$\omega$-divergent sequence of positive real numbers  and 
let $c = \{c_n\}_{n \in \NN}$ be  any
sequence of elements of $X$. Now we can construct the
{\it asymptotic cone} (notation: $Cone_{\omega}(X, c, d)$) of $X$ as follows:
\par Firstly define the set of all sequences $\{x_n\}_{n\in\NN}$
of elements of $X$ such that $\lim\limits_{\omega}\frac{d_X(x_n,
c_n)}{d_n}$ is bounded. In such set take the pseudo metric given
by: \[D(\{x_n\}_{n\in\NN}, \{y_n\}_{n\in\NN}) =
\lim\limits_{\omega}\frac{d_X(x_n, y_n)}{d_n}.\] By identifying
sequences whose distances is $0$ we get the metric space
$Cone_{\omega}(X, c, d)$.\par 
Asymptotic cones were firstly
introduced by Gromov in \cite{Gromov}. There has been a lot of
research relating properties of groups with topological properties
of its asymptotic cones. For example a finitely generated group is
virtually nilpotent if and only if all its asymptotic cones are
locally finite \cite{Gromov2} or a group is hyperbolic if and only if all of its asymptotic cones are $\RR$-trees (\cite{Gromov} and \cite{Drutu}). \par
In \cite{Dydak-Higes} it was shown 
the following relationship between 
the topological dimension of an asymptotic cone and the asymptotic Assouad-Nagata dimension of the space:

\begin{Thm}\label{DimensionOfCones} [Dydak, Higes \cite{Dydak-Higes}]
$dim(Cone_{\omega}(X, c, d) \le dim_{AN}(Cone_{\omega}(X, c, d) \le asdim_{AN}(X, d_X)$
for any metric space $(X, d_X)$ and every asymptotic cone $Cone_{\omega}(X, c, d)$.
\end{Thm}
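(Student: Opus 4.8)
The plan is to prove the two inequalities separately. The left inequality $\dim(Cone_{\omega}(X,c,d)) \le \dim_{AN}(Cone_{\omega}(X,c,d))$ is an instance of the general fact that topological covering dimension is bounded by Assouad-Nagata dimension for any metric space, so the real content is the right inequality $\dim_{AN}(Cone_{\omega}(X,c,d)) \le \asdim_{AN}(X,d_X)$, where passing to the cone converts an affine control function into a linear one. Writing $Y = Cone_{\omega}(X,c,d)$, for the left inequality I would suppose $\dim_{AN}(Y)\le n$ with linear control $D_Y(s)=Cs$, fix $s>0$, and take the cover $\{U_0,\dots,U_n\}$ whose $s$-scale connected components are $Cs$-bounded. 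Passing to the $s$-scale components $\{V_{i,\alpha}\}_\alpha$ of each $U_i$, any two points in distinct components of a fixed $U_i$ must be at distance $\ge s$ (otherwise they form a one-step $s$-chain inside $U_i$), so the open sets $W_{i,\alpha}=\{y: \dist(y,V_{i,\alpha})<s/3\}$ are pairwise disjoint for fixed $i$. The family $\{W_{i,\alpha}\}$ is then an open cover of $Y$ of multiplicity at most $n+1$ and mesh at most $(C+\tfrac23)s$; letting $s\to 0$ yields arbitrarily fine open covers of multiplicity $\le n+1$, hence $\dim(Y)\le n$.

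For the right inequality, assume $\asdim_{AN}(X,d_X)\le n$ with affine control $D_X(s)=Cs+k$, and I claim $\dim_{AN}(Y)\le n$ with the linear control $t\mapsto Ct$. Given a cone-scale $t>0$, for each $m$ I would invoke the defining cover $\{U_0^{td_m},\dots,U_n^{td_m}\}$ of $X$ at scale $s=td_m$, whose $(td_m)$-scale components are $(Ctd_m+k)$-bounded, and transport it to the cone by setting
\[
\widetilde{U}_i = \{\, p \in Y : p \text{ has a representative } \{x_m\} \text{ with } x_m \in U_i^{td_m} \text{ for } \omega\text{-many } m \,\}.
\]
Since any point of $Y$ has a representative and there are only finitely many indices $i$, a pigeonhole argument over $\omega$ shows $\{\widetilde{U}_0,\dots,\widetilde{U}_n\}$ covers $Y$.

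The heart of the matter is bounding the $t$-scale connected components of each $\widetilde{U}_i$. Suppose $p=[\{x_m\}]$ and $q=[\{y_m\}]$ lie in the same $t$-scale component, joined by a $t$-chain $p=p_0,p_1,\dots,p_N=q$ inside $\widetilde{U}_i$. Choosing for each $p_j$ a representative $\{z_m^j\}$ witnessing $p_j\in\widetilde{U}_i$, I would intersect the finitely many sets $\{m: z_m^j \in U_i^{td_m}\}\in\omega$ with the sets $\{m: d_X(z_m^j,z_m^{j+1})<td_m\}\in\omega$ (the latter lie in $\omega$ because $D(p_j,p_{j+1})<t$). On the resulting $A\in\omega$, the points $z_m^0,\dots,z_m^N$ form a $(td_m)$-chain inside $U_i^{td_m}$, hence lie in a single $(td_m)$-scale component, so $d_X(z_m^0,z_m^N)\le Ctd_m+k$ for every $m\in A$. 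Dividing by $d_m$ and taking the $\omega$-limit, the term $k/d_m$ vanishes because $d=\{d_m\}$ is $\omega$-divergent, giving $D(p,q)\le Ct$. Thus each $t$-scale component of $\widetilde{U}_i$ is $Ct$-bounded, establishing $\dim_{AN}(Y)\le n$.

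I expect the main obstacle to be the bookkeeping around representatives: membership in $U_i^{td_m}$ is not a coarse condition, so the sets $\widetilde{U}_i$ cannot be pulled back pointwise, and the argument must carry a consistent choice of representatives along the whole chain, invoking well-definedness of $D$ only at the final distance computation. The clean and decisive point, by contrast, is that the affine defect $k$ in the Assouad-Nagata control on $X$ is annihilated by the rescaling $1/d_m$, which is precisely why an \emph{asymptotic} Assouad-Nagata bound on $X$ upgrades to a genuine linear Assouad-Nagata bound on every asymptotic cone.
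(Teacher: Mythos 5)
The paper does not actually prove this theorem: it is imported verbatim from the cited reference [Dydak--Higes], so there is no in-paper argument to compare against. Your proof is correct and is essentially the standard argument from that reference: the left inequality is the usual fact that a linearly controlled cover at scale $s$ can be separated into an open cover of mesh $O(s)$ and multiplicity $\le n+1$ (using that distinct $s$-scale components of each $U_i$ are at distance $\ge s$), which forces $\dim \le n$ by the mesh--order characterization of covering dimension for metric spaces; the right inequality is the standard transport of the scale-$td_m$ covers of $X$ to the cone via an ultrafilter pigeonhole, with the affine defect $k$ killed by the $1/d_m$ rescaling. Your handling of the one genuinely delicate point --- carrying witnessing representatives along the whole $t$-chain and intersecting the finitely many relevant sets of $\omega$ before passing to the limit --- is exactly right, and the observation that the asymptotic (affine) bound on $X$ upgrades to a genuine linear bound on the cone is the correct moral of the theorem.
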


We recall now the following:
 
\begin{Def}
A function $f: (X, d_X) \to (Y, d_Y)$ between metric spaces is said to be a dilatation if there exists a constant $C \ge 1$ such that 
$d_Y(f(x), f(y)) = C \cdot d_X(x, y)$ for every $x, y \in X$.\par
The number $C$ will be called the {\it dilatation constant}.
\end{Def}

In the following proposition dilatations will be from balls of $\ZZ^n$ with the $l_1$-metric to general metric spaces. 

\begin{Prop}\label{SequenceOfCubesImpliesCube} Let $(X, d_X)$ be a
metric space and let $\{k_m\}_{m \in \NN}$ be an increasing sequence of natural numbers. If for some $n \in \NN$ there is a sequence of 
dilatations $\{f_m\}_{m =1}^{\infty}$ of the form $f_m: B^n(0, k_m) \to (X,d_X)$ with $B^n(0, k_m)\subset \ZZ^n$ the ball of radious $k_m$ then 
there exists an asymptotic cone $Cone_{\omega}(X, c, d)$ of $(X, d_X)$ such that $[-1, 1]^n \subset Cone_{\omega}(X, c, d)$.
\end{Prop}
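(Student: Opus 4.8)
The plan is to manufacture a single asymptotic cone in which the cube appears as the rescaled limit of the images $f_m(B^n(0,k_m))$, and to exhibit it by an explicit rounding map. Write $C_m\ge 1$ for the dilatation constant of $f_m$, so that $d_X(f_m(x),f_m(y))=C_m\|x-y\|_1$ for all $x,y\in B^n(0,k_m)$. I would take the basepoint sequence $c=\{c_m\}$ with $c_m=f_m(0)$ (note $0\in B^n(0,k_m)$ for every $m$) and the scaling sequence $d=\{d_m\}$ with $d_m=\frac{C_m k_m}{n+1}$. Since $\{k_m\}$ is increasing we have $d_m\ge \frac{k_m}{n+1}\to\infty$, so $d$ is divergent and $Cone_{\omega}(X,c,d)$ is defined for any non-principal ultrafilter $\omega$. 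The role of the factor $\frac{1}{n+1}$ is to inflate the $\ell_1$-ball enough that, in the limit, its image covers a region strictly containing $[-1,1]^n$ rather than merely the cross-polytope $\{\|v\|_1\le 1\}$ one would get from the naive scaling $d_m=C_m k_m$.

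Next I would define $\Phi\colon[-1,1]^n\to Cone_{\omega}(X,c,d)$ by rounding: for $v\in[-1,1]^n$ let $x_m(v)\in\ZZ^n$ be a nearest lattice point to $\frac{k_m}{n+1}\,v$, so that $\|x_m(v)-\frac{k_m}{n+1}v\|_1\le \frac{n}{2}$. Because $v\in[-1,1]^n$ forces $\|v\|_1\le n$, one gets $\|x_m(v)\|_1\le \frac{n}{n+1}k_m+\frac{n}{2}\le k_m$ as soon as $k_m\ge \frac{n(n+1)}{2}$; hence $x_m(v)\in B^n(0,k_m)$ for all but finitely many $m$, and I may set $x_m(v)=0$ on the remaining indices without affecting any $\omega$-limit, since $\omega$ is non-principal. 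Putting $\Phi(v)=[\{f_m(x_m(v))\}_m]$, the bound $d_X(f_m(x_m(v)),c_m)=C_m\|x_m(v)\|_1\le C_m k_m$ yields $\frac{d_X(f_m(x_m(v)),c_m)}{d_m}\le n+1$, so $\Phi(v)$ is a genuine point of the cone.

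Finally I would check that $\Phi$ is isometric for the $\ell_1$ metric on $[-1,1]^n$. By the dilatation property $d_X(f_m(x_m(v)),f_m(x_m(w)))=C_m\|x_m(v)-x_m(w)\|_1$, while the rounding estimate gives $\bigl|\,\|x_m(v)-x_m(w)\|_1-\frac{k_m}{n+1}\|v-w\|_1\,\bigr|\le n$. Dividing by $d_m=\frac{C_m k_m}{n+1}$ and passing to the $\omega$-limit,
\[
D(\Phi(v),\Phi(w))=\lim_{\omega}\frac{(n+1)\,\|x_m(v)-x_m(w)\|_1}{k_m}=\|v-w\|_1,
\]
because the residual error of size at most $n$ is divided by $k_m\to\infty$. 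Thus $\Phi$ embeds $([-1,1]^n,\ell_1)$ isometrically into $Cone_{\omega}(X,c,d)$, giving the asserted inclusion $[-1,1]^n\subset Cone_{\omega}(X,c,d)$ (and, via Theorem~\ref{DimensionOfCones}, the intended bound on $\asdim_{AN}$).

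I expect the only genuinely delicate point to be the domain constraint: the rounded approximants $x_m(v)$ must stay inside the $\ell_1$-ball $B^n(0,k_m)$ on which $f_m$ is defined, and this is precisely what the scaling margin $\frac{1}{n+1}$ secures near the corners of the cube, where $\|v\|_1$ is largest. Everything else is soft — the bounded rounding errors are annihilated once divided by $d_m\asymp k_m\to\infty$, the $\omega$-limits of the bounded ratios exist automatically, and the finitely many initial bad indices are invisible to $\omega$.
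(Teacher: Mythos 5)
Your proof is correct and follows essentially the same route as the paper's: fix the asymptotic cone with basepoints $c_m=f_m(0)$ and scaling comparable to $C_mk_m$, send each $v\in[-1,1]^n$ to the class of rounded lattice approximants, and use the dilatation identity to see that the bounded rounding errors vanish in the $\omega$-limit, giving an isometric copy of $([-1,1]^n,\ell_1)$. The only real divergence is that you read $B^n(0,k_m)$ as the genuine $\ell_1$-ball and insert the margin $\frac{1}{n+1}$ so the rounded points stay in the domain, whereas the paper scales by $d_m=C_mk_m$, treats the "ball" in effect as the coordinate cube $\{-k_m,\dots,k_m\}^n$, and reduces the general case to $n=1$ coordinatewise; your variant is the one that literally matches the stated hypothesis, so this is a welcome tightening rather than a different method.
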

\begin{proof}
Suppose given $(X, d_X)$ and $\{k_m\}_{m\in \NN}$ as in the hypothesis. 
Let us prove firstly the case $n = 1$.  Assume that $\{C_m\}_{m\in \NN}$ is the sequence of dilatation constants of $\{f_m\}_{m\in\NN}$.  Take $\omega$ some ultrafilter
of $\NN$ and define $c = \{f_m(0)\}_{m=1}^{\infty}$ and $d = \{d_m\}_{m= 1}^{\infty}$ with  $d_m = C_m \cdot k_m$. We will prove that $Cone_{\omega}(X, c, d)$ 
contains $[-1,1]$. For each $t \in [-1, 1]$ let $A_m^t$ be the subset  
$\{-k_m,..., k_m\}$  such that $x \in A_m^t$ if and only if the distance between $\frac{C_m\cdot x}{d_m}$ and $t$ is 
minimum. Notice that this implies that the distance between $C_m \cdot x$ and $d_m \cdot t$ is less thatn $C_m$. Take now
the sequence $\{r_m^t\}_{m = 1}^{\infty}$  where $r_m^t$ is the
infimum of $A_m^t$.\par
Define the map $g:[-1, 1] \to Cone_{\omega}(X, c, d)$ by $g(t) = x^t$
if the sequence $\{f_m(r_m^t)\}_{m = 1}^{\infty}$ is in the class $x^t$. As:
\[\lim\limits_{\omega} \frac{d(f_m(0), f_m(r_m^t))}{d_m} =
\lim\limits_{\omega} \frac{C_m\cdot |r_m^t|}{d_m} \le \lim\limits_{\omega} \frac{C_m \cdot k_m}{d_m} = 1\]
the map is well defined. Let us prove it is an
isometry. From the definition of $r_m^t$ we get that if $t_1 < t_2$ then
$r_m^{t_1}\le r_m^{t_2}$ what implies $\lim\limits_{\omega}\frac{d(f_m(r_m^{t_1}), f_m(r_m^{t_2}))}{d_m}
= \lim\limits_{\omega} \frac{C_m(r_m^{t_2}-r_m^{t_1})}{d_m}$.
So the unique thing we need to show is that $\lim\limits_{\omega} \frac{C_m \cdot r_m^t}{d_m} = t$
for every $t$. Notice that we have $\lim\limits_{\omega}\frac{C_m}{d_m} = 0$ as 
$\lim\limits_{\omega}k_m = \infty$ but $\lim\limits_{\omega} \frac{C_m \cdot k_m}{d_m} = 1$. This implies that  
given $\epsilon >0$ there exists $G_{\epsilon} \in \omega$ such that $\frac{C_m}{d_m}<\epsilon$ for every $m \in G_{\epsilon}$. Therefore 
by the choice of $r_m^t$  
if $m \in G_{\epsilon}$ we have $|C_m\cdot r_m^t -d_m\cdot t|<  C_m$ and then $|\frac{C_m \cdot r_m^t}{d_m} - t|< \frac{C_m}{d_m}\le \epsilon$.\par
Now let us do the general case. Let $(s_1, ..., s_n) \in [-1, 1]^n$. By the previous case we get that for every $j = 1,..., n$
 there exists a sequence $\{r_m^{s_j}\}_{m \in \NN}$ with $r_m^{s_j}\in \{-k_m,...,k_m\}$ such that 
$\lim\limits_{\omega} \frac{C_m\cdot r_m^t}{d_m} = s_j$. In a similar way as before we construct a map $g: [-1,1]^n \to Cone_{\omega}(X, c, d)$ 
by defining $g(s_1, ..., s_m)$ as the class that contains the sequence $\{f_m(r_m^{s_1}, ..., r_m^{s_n})\}_{m = 1}^{\infty}$. To finish the proof it will be enough to check that
 for every $s, t \in [-1, 1]^n$ with $s = (s_1,..., s_n)$ and $t = (t_1,..., t_n)$, the following equality holds:
\[   \lim\limits_{\omega}\frac{d_X(f_m(r_m^{s_1},..., r_m^{s_n}), f_m(r_m^{t_1},..., r_m^{t_n}))}{d_m} = \sum_{i= 1}^n |s_i -t_i|\]
As $f_m$ is a dilatation of constant $C_m$ we can write:
\[\lim\limits_{\omega}\frac{d_X(f_m(r_m^{s_1},..., r_m^{s_n}), f_m(r_m^{t_1},..., r_m^{t_n}))}{d_m} = \sum_{i= 1}^n \lim\limits_{\omega} 
\frac{C_m\cdot |r_m^{s_i}- r_m^{t_i}|}{d_m}\]
And again by the case $n =1$ we can deduce that the last term satisfies the equality: 
 \[\sum_{i= 1}^n \lim\limits_{\omega} 
\frac{C_m\cdot |r_m^{s_i}- r_m^{t_i}|}{d_m} = \sum_{i=1}^n |s_i-t_i|\].
\end{proof}

From the previous proposition we can get the following result that is one of the ingredients of the main theorem.
 
\begin{Cor}\label{SequenceImpliesNagata}
If for metric space $(X, d_X)$ and form some $n \in \NN$ there exists a sequence of dilatations $f_m : B^n(0 , k_m) \to (X, d_X)$ with
$\lim\limits_{m\to \infty} k_m = \infty$ and $B^n(0, k_m)\subset \ZZ^n$ the ball of radious $k_m$ then $asdim_{AN}(X, d_X) \ge n$.
\end{Cor}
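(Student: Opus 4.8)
The plan is to combine Proposition~\ref{SequenceOfCubesImpliesCube} with Theorem~\ref{DimensionOfCones}. Given the hypothesis of the corollary, namely a sequence of dilatations $f_m: B^n(0, k_m) \to (X, d_X)$ with $\lim\limits_{m \to \infty} k_m = \infty$, I first note that passing from the condition $\lim_{m\to\infty} k_m = \infty$ (an ordinary limit) to the condition required by Proposition~\ref{SequenceOfCubesImpliesCube} is harmless: since $\{k_m\}$ is a sequence of natural numbers tending to infinity, we may extract an increasing subsequence, and restricting the family of dilatations to that subsequence still gives dilatations of balls of unboundedly large radius. Thus the hypotheses of Proposition~\ref{SequenceOfCubesImpliesCube} are met.

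Applying Proposition~\ref{SequenceOfCubesImpliesCube} then yields a non-principal ultrafilter $\omega$, a sequence $c$ of basepoints, and an $\omega$-divergent sequence $d$ such that the asymptotic cone $Cone_{\omega}(X, c, d)$ contains an isometric copy of the cube $[-1, 1]^n$ (with the $l_1$-metric, matching the metric used on the balls $B^n(0, k_m) \subset \ZZ^n$). The topological dimension of $[-1, 1]^n$ is $n$, and since topological dimension is monotone under inclusion of subspaces, we obtain $\dim(Cone_{\omega}(X, c, d)) \ge n$.

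The conclusion now follows directly from the chain of inequalities in Theorem~\ref{DimensionOfCones}, which states that
\[
\dim(Cone_{\omega}(X, c, d)) \le \dim_{AN}(Cone_{\omega}(X, c, d)) \le asdim_{AN}(X, d_X)
\]
for any asymptotic cone of $(X, d_X)$. Combining this with $\dim(Cone_{\omega}(X, c, d)) \ge n$ immediately gives $asdim_{AN}(X, d_X) \ge n$, as required.

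In truth there is no serious obstacle here: the corollary is essentially a formal assembly of two results already in hand, and the only point requiring the smallest amount of care is the passage from $\lim_{m\to\infty} k_m = \infty$ to the monotone-subsequence hypothesis of the proposition, together with the observation that the cube $[-1,1]^n$ genuinely has covering dimension $n$ so that the first inequality of Theorem~\ref{DimensionOfCones} can be fed the correct lower bound. Everything else is a direct citation.
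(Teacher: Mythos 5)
Your proof is correct and follows essentially the same route as the paper's own argument: apply Proposition~\ref{SequenceOfCubesImpliesCube} to embed $[-1,1]^n$ in an asymptotic cone, then conclude via the inequality chain of Theorem~\ref{DimensionOfCones}. The extra remarks about extracting an increasing subsequence of the $k_m$ and about monotonicity of topological dimension are sensible points of care that the paper leaves implicit.
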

\begin{proof}
By  proposition \ref{SequenceOfCubesImpliesCube} we get that there exists an asymptotic cone of $X$ such that $[-1, 1]^n \subset Cone_{\omega}(X, c, d)$.
Applying theorem \ref{DimensionOfCones} we obtain immediately:
\[n \le dim(Cone_{\omega}(X, c, d))\le dim_{AN}(Cone_{\omega}(X, c, d)) \le asdim_{AN}(X, d_X)\]
\end{proof}

\section{Main results}\label{MainSection}
The idea of the proof of the main theorem consists in creating a metric inductively. In each step we will construct a new metric that satisfies two conditions. 
First condtion says that the new metric doer not change a sufficiently large ball of the old metric. Second condtion implies there is a dilatation from some 
sufficiently large ball of $\ZZ^n$ into the group with the new metric. In fact that dilatation will be the restriction of some homomorphism $f: \ZZ^n \to G$. 
Then we will apply corollary \ref{SequenceImpliesNagata}. The following lemma could be considered as the 
induction step.   
\begin{Lem}\label{Inductive}Let $G$ be a finitely generated group such that its center is not locally finite. Let $d_G$ be a proper left invariant metric.  
In such condtions for every $k, m, R \in \NN$ 
there exists a proper left invariant metric $d_{\omega}$ that satisfies the following conditions:
\begin{enumerate}
\item $\|g\|_{\omega} \le \|g\|_G$.
\item $\|g\|_{G} = \|g\|_{\omega}$ if $\|g\|_{\omega}\le R$. 
\item There is an homomorphism $f: \ZZ^m \to G$ such that the restriction $f|_{B(0,k)}$ of $f$ to the ball radious $k$ is a dilatation in $(G, d_{\omega})$.
\end{enumerate} 
\end{Lem}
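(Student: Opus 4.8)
The plan is to realise the required dilatation inside the cyclic subgroup generated by a central element of infinite order, and to produce $d_\omega$ by Smith's method of weights. Since the center $Z(G)$ is abelian and not locally finite, it is not a torsion group, so it contains an element $z$ of infinite order; as $d_G$ is proper, the set $\{z^n\}_{n\in\ZZ}$ is infinite and hence $\|z^n\|_G \to \infty$ as $|n|\to\infty$, i.e. $n\mapsto \|z^n\|_G$ is proper. Fix the intended dilatation constant $C := R+1$ and a homomorphism $\phi:\ZZ^m\to\ZZ$ of the form $\phi(a_1,\dots,a_m)=\sum_{i=1}^m a_i b_i$ with $b_i := M(8k+1)^{i-1}$, and set $f:\ZZ^m\to G$, $f(a)=z^{\phi(a)}$, which is a homomorphism because $z$ is central. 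The integer $M$ will be fixed below using properness.

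The new norm is obtained from the weight function $w$ that agrees with $\|\cdot\|_G$ on all of $G$ except that $w(z^{\pm b_i}) := C$ for $i=1,\dots,m$; write $\|\cdot\|_\omega$ for the induced proper norm and $d_\omega$ for its metric (see Remark \ref{Weight}). Using that $n\mapsto\|z^n\|_G$ is proper, choose $M$ so large that $\|z^n\|_G \ge 2kC$ whenever $|n|\ge M$. Because $8k+1 > 2\cdot 4k$, the integer $\sum_i d_i(8k+1)^{i-1}$ is a balanced base-$(8k+1)$ expansion, so for every nonzero $d\in B(0,4k)\subset\ZZ^m$ one has $|\phi(d)|\ge M$, and therefore $\|z^{\phi(d)}\|_G \ge 2kC$; in particular $C\le \|z^{b_i}\|_G$, so $w$ genuinely lowers the weights of the special generators.

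Now I verify the three conditions. Condition (1) is immediate since $w\le \|\cdot\|_G$ pointwise, whence $\|g\|_\omega\le\|g\|_G$. For (2), observe that any factorisation of $g$ using at least one special generator $z^{\pm b_i}$ has $w$-weight $\ge C=R+1>R$; hence if $\|g\|_\omega\le R$ the optimal factorisation uses none of them, so it is an ordinary $\|\cdot\|_G$-factorisation, giving $\|g\|_\omega\ge\|g\|_G$ and thus equality. For the dilatation in (3), note that for $a,b\in B(0,k)$ we have $f(a)\1 f(b)=z^{\phi(c)}$ with $c=b-a$ and $\|c\|_1\le 2k$, so it suffices to show $\|z^{\phi(c)}\|_\omega = C\|c\|_1$ for all $c\in B(0,2k)$. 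The inequality $\le$ follows from $z^{\phi(c)}=\prod_i (z^{b_i})^{c_i}$, a product of $\|c\|_1$ special generators.

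The heart of the argument, and the step I expect to be the main obstacle, is the reverse inequality: I must rule out mixed factorisations that exploit the original generators to reach $z^{\phi(c)}$ more cheaply, and here centrality of $z$ is essential. In any factorisation of $z^{\phi(c)}$, collect the $q$ special letters (all central); their product is $z^N$ with $N=\phi(n)$, $\|n\|_1\le q$, contributing weight $qC$, while the remaining letters multiply to $z^{\phi(c)-N}=z^{\phi(c-n)}$ and contribute at least $\|z^{\phi(c-n)}\|_G$. If $q\ge\|c\|_1$ the weight is already $\ge C\|c\|_1$; otherwise $d:=c-n$ is a nonzero element of $B(0,4k)$, so the size estimate gives $\|z^{\phi(d)}\|_G\ge 2kC\ge C\|c\|_1$, and the total weight again dominates $C\|c\|_1$. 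Hence $\|z^{\phi(c)}\|_\omega=C\|c\|_1$, the restriction $f|_{B(0,k)}$ is a dilatation of constant $C\ge 1$, and all three conditions hold.
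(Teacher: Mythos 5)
Your proof is correct and follows essentially the same route as the paper's: pick a central element $z$ of infinite order, assign the reduced weight $C>R$ to geometrically spaced powers $z^{\pm b_i}$ chosen so that small integer combinations of the $b_i$ are uniquely represented, and use properness of $n\mapsto\|z^n\|_G$ to show any factorisation mixing in ordinary generators is too expensive. The only differences are cosmetic (base-$(8k+1)$ spacing instead of the paper's $2^{p_j}$ growth condition, and a direct lower bound on the two cases $q\ge\|c\|_1$ versus $d=c-n\ne 0$ in place of the paper's contradiction split on $s=1_G$ versus $s\ne 1_G$).
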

\begin{proof} 
Suppose $k, m$ and $R$ given and let $a$ and $C$ be two natural numbers that satisfy:
\[R < C < \frac{a}{2\cdot k\cdot m^2}.\]
As the center of $G$ is not locally finite there exists an element $g$ in the center of infinite order. The restriction of the metric $d_G$ to the subgroup generated by $g$ 
defines a proper left invariant metric in $\ZZ = < g >$. By theorem \ref{SmithTheorem} we know that two proper left invariant 
metrics defined in a group are coarsely equivalent, hence 
we can find an integer number $h_1 \in \ZZ$ such that if $|h| \ge |h_1|$ then $\|g^{h}\|_G \ge a$. Let $p_1 =1$  and for every $j = 2...m$ we define $p_j$ 
as a sufficiently large 
number that satisfies $\sum_{i=1}^{j-1} (2\cdot k\cdot m)2^{p_i} < 2^{p_j}$. Take now the finite set of integer numbers $\{h_1,..., h_m\}$ with $h_j = 2^{p_j}\cdot h_1$ for 
every $j = 2,..., m$. In this situation we create the norm $\|\cdot\|_{\omega}$ generated by the following system of weights:
\[\omega(z) = \begin{cases} \|z\|_G \text{ if } z \ne g^{\pm h_i} \text{ for every } i = 1...m \\ C \text{ otherwise } \end{cases}\]
By the choice of $C$ and $\{h_1,..., h_m\}$ it is clear that the two first conditions of the lemma are satisfied. To prove the 
third condition we define the homomorphism $f: \ZZ^m \to G$ as $f(x_1,..., x_m) = g^h$ with $h = \sum_{i=1}^m x_i \cdot h_i$. 
Let us show that the restriction 
$f|_{B(0, k)}: B(0, k) \to G$ to the ball of radious $k$ is a dilatation of constant $C$. It will be enough to check that:
\[\|g^h\|_{\omega} = \sum_{i = 1}^m |x_i| \cdot C \text{ if } h = \sum_{i=1}^m x_i \cdot h_i \text{ and } |x_i| \le k\]
The reasoning will be by contradiction. Suppose there exists an element of the form $g^h$ with $h = \sum_{i= 1}^{m} x_i\cdot h_i$ and $|x_i| \le k$ 
such that $\|g^h\|_{\omega} < \sum_{i = 1}^m |x_i| \cdot C$. 
This implies that there exist and $r = \sum_{i= 1}^m y_i \cdot h_i$ and an $s \in G$ such that 
$g^h = g^r\cdot s$ and:
\[\|g^h\|_{\omega} = \sum_{i=1}^m |y_i| \cdot C + \|s\|_G. \]
Notice that $|y_i|\le k\cdot m$. There are now two possible cases:\par
Case $s = 1_G$: In this situation we have $\sum_{i=1}^m |y_i| \cdot C < \sum_{i=1}^m |x_i| \cdot C$ so there exists an $i$ such that $x_i \ne y_i$. Let $j = 
\max\{i | x_i \ne y_i\}$. From the fact $g^h = g^r$ we can deduce $(x_j - y_j)\cdot h_j = \sum_{i=1}^{j-1} (y_i- x_i) \cdot h_i$ it means $(x_j-y_j) \cdot 2^{p_j}\cdot h_1 = 
\sum_{i=1}^{j-1}(y_i- x_i) \cdot 2^{p_i}\cdot h_1$ and then:
\[2^{p_j} \le |x_j-y_j| \cdot 2^{p_j}\le \sum_{i=1}^{j-1}|y_i- x_i| \cdot 2^{p_i} \le \sum_{i=1}^{j-1}|y_i|+ |x_i| \cdot 2^{p_i} \le \sum_{i=1}^{j-1}2^{p_i}\cdot (k \cdot m+ k)
< 2^{p_j}\]
A contradiction. Therefore the first case is not possible.\par
Case 2: $s \ne 1_G$. In this case we have $g^{(h-r)}  = s$ and $h-r \ne 0$ what implies $|h-r| \ge |h_1|$ and hence $\|s\|_G \ge a$. But from the fact 
$\|g^h\|_{\omega} = \sum_{i=1}^m |y_i| \cdot C + \|s\|_G < \sum_{i = 1}^m |x_i| \cdot C$ we can deduce:
\[a \le \|s\|_G < \sum_{i=1}^m (|x_i|-|y_i|) \cdot C \le \sum_{i=1}^m (k + k\cdot m) \cdot C \le 2\cdot k \cdot m^2 \cdot C.\]
Therefore $C \ge \frac{a}{2\cdot k\cdot m^2}$ and this contradicts the choice of $a$ and $C$.
\end{proof}
Here is the main theorem.
\begin{Thm}\label{MainTheorem} If $G$ is a group such that its center is not locally finite then there exists a proper left invariant metric $d_G$ such that $asdim_{AN}(G, d_G) = \infty$.
\end{Thm}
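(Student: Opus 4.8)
The plan is to build the desired metric as the limit of an inductively constructed sequence of proper left invariant metrics, each obtained from the previous one by Lemma \ref{Inductive}, and then to read off infinitely many rescaled cubes of every dimension inside the limit metric via Corollary \ref{SequenceImpliesNagata}. First I note that any group carrying a proper left invariant metric is automatically countable, and that the proof of Lemma \ref{Inductive} uses nothing about $G$ beyond the existence of an infinite order element in its center (which is exactly what ``center not locally finite'' provides: some finitely generated subgroup of $Z(G)$ is infinite, hence contains $\ZZ$ by the structure of finitely generated abelian groups). So the lemma applies to our $G$ even when it is not finitely generated, and I may use it freely.

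Next comes the induction. Fix an initial integer valued proper left invariant norm $\|\cdot\|_0$ on $G$ (possible by Remark \ref{Weight}(3)) and set $R_0=0$. At step $n\ge 1$, suppose $\|\cdot\|_{n-1}$ has been built together with the dilatation constant $C_{n-1}$ produced at the previous step. Choose a natural number $R_n$ with $R_n>\max\{R_{n-1},\,2(n-1)C_{n-1}\}$ (so $R_n\to\infty$), and apply Lemma \ref{Inductive} to $\|\cdot\|_{n-1}$ with parameters $m=n$, $k=n$ and $R=R_n$. This yields a proper left invariant norm $\|\cdot\|_n$ with $\|g\|_n\le\|g\|_{n-1}$ for all $g$ (condition (1)) and equality whenever $\|g\|_n\le R_n$ (condition (2)), together with a homomorphism $f_n:\ZZ^n\to G$ whose restriction to $B(0,n)$ is a dilatation of some constant $C_n$ in $\|\cdot\|_n$ (condition (3)).

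I then pass to the limit. By condition (1) the sequence $\{\|g\|_n\}_n$ is non-increasing for each fixed $g$, and since all norms are integer valued it stabilizes; define $\|g\|_\infty=\lim_n\|g\|_n$. Symmetry and the triangle inequality pass to the limit, and $\|g\|_\infty\ge 1$ for $g\ne 1_G$ because the values are positive integers, so $\|\cdot\|_\infty$ separates points. For properness, fix $N$ and pick $n$ with $R_n\ge N$: if $\|g\|_\infty\le N$ then by integrality $\|g\|_{n'}=\|g\|_\infty\le N\le R_{n'}$ for some $n'\ge n$, and a descending application of condition (2) from $n'$ down to $n$ (legitimate since $R_j\ge R_n\ge N$ for all $j\ge n$) gives $\|g\|_n=\|g\|_{n'}\le N$; hence $\{g:\|g\|_\infty\le N\}\subseteq\{g:\|g\|_n\le N\}$, which is finite. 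So $\|\cdot\|_\infty$ is a proper norm and its associated metric $d_\infty$ is a proper left invariant metric on $G$.

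It remains to see that each dilatation survives into $d_\infty$ and to conclude. Since $f_n$ is a homomorphism with central image, for $x,y\in B(0,n)$ we have $f_n(x)^{-1}f_n(y)=f_n(y-x)$ with $y-x\in B(0,2n)$, so the pairwise distances inside $f_n(B(0,n))$ are the norms $\|f_n(y-x)\|_n\le 2n\,C_n$. For every $n'>n$ the choice made at step $n+1$ gives $R_{n'}\ge R_{n+1}>2n\,C_n$, so these finitely many elements stay below the freezing threshold and condition (2) forces $\|f_n(y-x)\|_{n'}=\|f_n(y-x)\|_n$; passing to the limit, $f_n|_{B(0,n)}$ is still a dilatation of constant $C_n$ in $d_\infty$. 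Finally fix $p\in\NN$: for each $n\ge p$, restricting $f_n$ along the inclusion $\ZZ^p\hookrightarrow\ZZ^n$ of the first $p$ coordinates yields a dilatation $B^p(0,n)\to(G,d_\infty)$, and since $n\to\infty$ Corollary \ref{SequenceImpliesNagata} gives $\asdim_{AN}(G,d_\infty)\ge p$. As $p$ is arbitrary, $\asdim_{AN}(G,d_\infty)=\infty$. I expect the main difficulty to lie precisely in the bookkeeping of the radii $R_n$: they must grow fast enough to freeze all previously constructed dilatations forever, while the very same freezing condition (2) is what is needed to keep the limiting norm proper.
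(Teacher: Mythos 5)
Your proposal is correct and follows essentially the same route as the paper: iterate Lemma \ref{Inductive} with radii $R_n$ chosen large enough to freeze all previously constructed dilated cubes, pass to the limit norm, and invoke Corollary \ref{SequenceImpliesNagata} after restricting $f_n$ to a fixed-dimensional sublattice. Your additional observations — that properness forces $G$ to be countable, that the lemma's proof only needs an infinite-order central element (so the finite-generation hypothesis in its statement is harmless), and the explicit bound $\mathrm{diam}(f_n(B(0,n)))\le 2nC_n$ — are correct refinements of details the paper leaves implicit.
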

\begin{proof}
We will use corollary \ref{SequenceImpliesNagata} and the previous lemma. Take any integer valued proper left invariant metric $d$ in $G$ (see remark \ref{Weight}) 
and some increasing sequences 
$\{k_i\}_{i\in \NN}$ and $\{M_i\}_{i\in \NN}$ 
of natural numbers. Let us construct the metric $d_G$ of the theorem by an inductive process. \par
Step 1: Apply the previous lemma to $d$ with $k = k_1$, $m = 1$ and $R = M_1$. We obtain a proper left invariant metric $d_{\omega_1}$ such that the ball $B_{\omega_1}(1_G, R_1)$
is equal to the ball of radious $R_1$ of $d$. Also there exists a dilatation $f: B(0, k_1) \to G$ from the ball or radious $k_1$ of $\ZZ$ to $G$.\par
Induction Step: Suppose now that we have contructed a finite sequence of proper left invariant metrics $ L = \{d_{\omega_1}, ..., d_{\omega_n}\}$ and a finite sequence of natural numbers 
$R_1 < R_2 <...< R_n$ that satisfy the following conditions:
\begin{enumerate}
\item $\|g\|_{\omega_i} \le \|g\|_{\omega_{i-1}}$.
\item $\|g\|_{\omega_{i}} = \|g\|_{\omega_{i-1}}$ if $\|g\|_{\omega_{i}} \le R_i$
\item There exists an homomorphism $f_i: \ZZ^i \to G$ such that the restriction $f_i|_{B(0,k_i)}$ is a dilatation in $(G, d_{\omega_i})$ for every $i = 1,...,n$
\item $diam(f_i(B(0, k_{i})) < R_{i+1}$ for every $i = 1,...,n-1$ 
\end{enumerate} 
In these conditions define $R_{n+1} = \max\{M_{n+1}, R_n+1, diam(f_n(B(0, k_n))\}$ and apply the previous lemma to $d_{\omega_n}$ with $k = k_{n+1}$, $m = n+1$ and
$R = R_{n+1}$. We have now a new proper left invariant metric $d_{\omega_{n+1}}$. It is clear that the new finite sequence of proper left invariant metrics $\{d_{\omega_{n+1}}\}\cup 
L$ and the new finite sequence of numbers $R_1 < R_2 <...< R_n < R_{n+1}$ satisfy the same four conditions. 
\par 
Repeating this procedure we construct a sequence of integer 
valued proper left invariant 
metrics $\{d_{\omega_i}\}_{i=1}^{\infty}$ and an increasing sequence of natural numbers $\{R_i\}_{i=1}^{\infty}$.  By the first two properties and the fact 
$\lim_{i\to \infty} R_i = \infty$
we deduce that 
for every $g\in G$ the sequence $\{\|g\|_{\omega_i}\}_{i=1}^{\infty}$ is asymptotically constant. Define now
 the function 
$\|\cdot\|_G: G \to \NN$ by $\|g\|_G =\lim_{i \to \infty} \|g\|_{\omega_i}$. Again by the first two properties we can check $\|\cdot\|_G$ is a proper norm. So it 
defines a proper left invariant metric $d_G$.  Using 
the third and fourth properties 
we have that for every $i\in \NN$ there exists an homomorphism $f_i: \ZZ^i \to G$ such that the restriction to the ball $B(0, k_i)$ is a dilatation in $(G, d_G)$. 
As in each step we are increasing the dimension of the balls, we get that for every $m \in \NN$ the metric space $(G, d_G)$ satisfies the conditions of corollary \ref{SequenceImpliesNagata} so we get $asdim_{AN}(G, d_G) \ge m$.
Therefore $asdim_{AN}(G, d_G) = \infty$.  
\end{proof}

Recall that in \cite{Dran-Smith} Dranishnikov and Smith showed that the asymptotic dimension of finitely generated 
nilpotent groups is equal to the hirsch length of the group. Hence the asymptotic dimension of a nilpotent group is always finite for 
every proper left invariant metric. Next trivial corollary shows that the unique nilpotent groups that satisfy the same property for the asymptotic 
Assouad-Nagata dimension are finite.  

\begin{Cor} Let $G$ be a finitely generated nilpotent group. $G$ is non finite if and only if 
there exists a proper left invariant metric $d_{\omega}$ defined in $G$ such that $asdim_{AN}(G, d_{\omega}) = \infty$.
\end{Cor}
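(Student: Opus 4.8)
The plan is to establish the two implications of the equivalence separately, obtaining the forward one from the Main Theorem (Theorem \ref{MainTheorem}) and disposing of the reverse one by a triviality. The reverse implication I would handle immediately: if $G$ is finite then every proper left invariant metric $d$ makes $(G,d)$ a bounded metric space, so $(G,d)$ admits a constant $0$-dimensional control function and $\asdim_{AN}(G,d)=0$. Hence the mere existence of a metric $d_{\omega}$ with $\asdim_{AN}(G,d_{\omega})=\infty$ forces $G$ to be infinite.

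For the forward implication I would appeal to Theorem \ref{MainTheorem}, so that it is enough to prove that the center of an infinite finitely generated nilpotent group $G$ is not locally finite. Since finitely generated nilpotent groups satisfy the maximal condition on subgroups, $Z(G)$ is finitely generated abelian; such a group fails to be locally finite precisely when it is infinite, equivalently when it contains an element of infinite order (which then generates an infinite cyclic, hence finitely generated infinite, subgroup). Thus everything reduces to the claim that an infinite finitely generated nilpotent group possesses a central element of infinite order.

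I would prove this claim by induction on the nilpotency class $c$. For $c\le 1$ the group is abelian and infinite, so $Z(G)=G$ already contains an element of infinite order. For $c\ge 2$ one has $Z(G)\neq 1$; being finitely generated abelian, $Z(G)$ is either infinite, in which case we are done, or a finite central subgroup $F$. In the latter case $\bar G=G/Z(G)$ is finitely generated nilpotent of class $c-1$ and still infinite, so by induction it contains a central element $\bar z$ of infinite order, which I lift to some $z\in G$ satisfying $[z,g]\in Z(G)=F$ for every $g\in G$.

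The main obstacle is exactly this final lifting step, since $z$ is only central modulo the finite subgroup $F$, not central. The computation that resolves it uses crucially that $F$ is central: from $z^{g}=z\,[z,g]$ with $[z,g]\in F$ and $m:=\exp(F)$ one obtains $(z^{m})^{g}=(z\,[z,g])^{m}=z^{m}[z,g]^{m}=z^{m}$ for every $g\in G$, so that $z^{m}\in Z(G)$. On the other hand $\bar z$ has infinite order, whence $z^{m}\notin Z(G)$ for all $m\ge 1$, a contradiction. Therefore $Z(G)$ was infinite all along, which proves the claim and, through Theorem \ref{MainTheorem}, the forward implication; the two implications together yield the stated equivalence.
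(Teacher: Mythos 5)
Your proposal is correct and follows the same route as the paper: the reverse implication from boundedness of finite groups (a bounded space has a constant $0$-dimensional control function, hence $\asdim_{AN}=0$), and the forward implication from Theorem \ref{MainTheorem}. The only difference is one of completeness: the paper simply declares the forward direction ``a particular case of the main theorem,'' leaving unverified that an infinite finitely generated nilpotent group has a center that is not locally finite, whereas you supply that verification. Your induction on the nilpotency class is sound --- in the case $Z(G)=F$ finite you lift a central element $\bar z$ of infinite order from $G/F$, use that $[z,g]\in F$ is central of exponent dividing $m=\exp(F)$ to get $(z^m)^g=z^m[z,g]^m=z^m$, and derive the contradiction $z^m\in F$ against $\bar z$ having infinite order. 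This is a standard fact, but making it explicit is a genuine improvement on the paper's one-line argument, since the applicability of the main theorem is exactly the point that needs checking.
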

\begin{proof}
For one implication just use that the asymptotic Assouad-Nagata dimension is zero for all bounded spaces. The other implication is a particular case
 of the main theorem.
\end{proof}

\end{document}